\documentclass[a4wide,11pt]{amsart}

\usepackage{amsmath}
\usepackage{amssymb}
\usepackage{amsfonts}
\usepackage{subfig}
\usepackage{graphicx}
\usepackage{yfonts}

\usepackage{epic}
\usepackage{latexsym}

\usepackage{epsfig}

\usepackage[mathscr]{euscript}

\usepackage[all]{xy}

\DeclareMathAlphabet{\mathpzc}{OT1}{pzc}{m}{it}

\newtheorem{lemma}{Lemma}[section]
\newtheorem{theorem}[lemma]{Theorem}

\newcommand{\cl}{{\mathcal L}}

\newcommand{\ch}{{X \choose 2}}

\newcommand{\cC}{{\mathcal C}}

\newcommand{\cL}{{\mathcal L}}

\newcommand{\cR}{{\mathcal R}}


\begin{document}

\title[Tree Reconstruction from Triplet Cover Distances]
{Reconstructing fully-resolved trees from triplet cover distances}
\author{Katharina T. Huber and Mike Steel}
\address{
(K.T.H.): School of Computing Sciences,
University of East Anglia, UK; phone: +44 (0) 1603 593211;
FAX: +44 (0) 1603 593345.
 \\
(M.S.): Department of Mathematics and Statistics,
University of Canterbury, New Zealand.
}

\date{\today}
\maketitle

\begin{abstract}
It is a classical result that any finite tree with positively 
weighted edges, and without vertices of degree 2, is uniquely determined
by the weighted path distance between each pair of leaves. Moreover, it 
is possible for a (small) strict subset $\cl$ of leaf pairs to 
suffice for reconstructing the
tree and  its edge weights, given just the distances between 
the leaf pairs in $\cl$. 
It is known that any set $\cl$ with this property for a tree in 
which all interior vertices have degree 3 must form a {\em cover} 
for $T$ -- that is, for each
interior vertex $v$ of $T$, $\cl$ must contain a pair of leaves 
from each pair of the three components of  $T-v$.  Here we 
provide a partial converse of this
result by showing that if a set $\cl$ of leaf pairs forms 
a cover  of a certain type for such a tree $T$ then $T$ 
and its edge weights can be uniquely determined from
the distances between the pairs of leaves in $\cl$. Moreover, 
there is a polynomial-time algorithm for achieving this 
reconstruction. The result establishes a special
case of a recent question concerning `triplet covers', 
and is relevant to a problem arising in evolutionary genomics.

\end{abstract}

{\bf Keywords:} phylogenetic tree, tree metric, tree reconstruction, 
triplet cover.

{\bf Email:} katharina.huber@cmp.uea.ac.uk (Corresponding author)

\section{Introduction}\label{intro}

Any tree $T$ with positively weighted edges, induces 
a metric $d$ on the set of leaves by considering the 
weighted path distance in $T$ between each pair of leaves.
Moreover, provided $T$ has no vertices of degree 2, 
and that we ignore the labeling of interior vertices, 
both $T$ and its edge weights are uniquely 
determined by the metric $d$.  This uniqueness result 
has been known since the $1960$s and fast algorithms 
exist for reconstructing both the tree and its edge weights 
from $d$ (for further background the interested  
reader may consult ~\cite{bar} and \cite{sem} and the references therein).  

The uniqueness result and the algorithms are important 
in evolutionary biology for reconstructing an evolutionary 
tree of species from genetic data \cite{fel}. However  
in this setting one frequently may not have $d$-values 
available for all pairs of species, due to the patchy 
nature of genomic coverage \cite{san}.  

This raises a fundamental mathematical question -- for 
which subsets of pairs of leaves of a tree do we need 
to know the $d$-values in order to uniquely recover the 
tree and its edge weights? In general this appears a 
difficult question (indeed determining 
whether such a partial $d$-metric is realized by {\em any} 
tree is NP-hard \cite{far}).  However, some sufficient 
conditions (as well as some necessary conditions) for 
uniqueness to hold have been found, in  \cite{cha,yus},  
and more recently in  \cite{dre3}, and \cite{gue}.    In this 
paper we consider the uniqueness question for trees that are 
`fully-resolved' (i.e. all the interior vertices have degree 3) 
as these trees are of particular importance in evolutionary 
biology, and because the uniqueness question is easier to 
study for this class of trees.   

The structure of this paper is as follows. First we introduce 
some background terminology and concepts, and then we define 
the particular type of subsets of leaf pairs (called `stable 
triplet covers') which we show suffice to uniquely determine 
a fully-resolved tree. Moreover, we show how 
this comes about by establishing two combinatorial properties 
of stable triplet covers - a `shellability' property and a 
graph-theoretic property related to tree-width, which we show 
is quite different to shellability.  We  conclude by providing 
a proof that a polynomial-time algorithm will reconstruct a 
tree and its edge weights for any set of leaf pairs that 
contains  a stable triplet cover (or more generally a shellable 
subset).  Our result answers a special case of the question posed 
at the end of \cite{dre3} of whether every `triplet cover' of a 
fully-resolved tree determines
the tree and its edge weights.

\section{Preliminaries}
We now introduce some precise definitions required to state and prove our main results.
We mostly follow the  notation and terminology of \cite{sem} and \cite{dre3}.

\subsection{$X-$trees, edge-weightings and  distances}

For the rest of the paper, assume that $|X|\geq 3$. 
An $X-$tree $T=(V,E)$ is 
a graph theoretical tree whose leaf set is $X$ and which does not have any
vertices of degree 2. We call an $X-$tree {\em fully-resolved} if 
every {\em interior vertex} of $T$, that is, every non-leaf vertex of $T$,
has degree three. Moreover, we call two distinct leaves $x$ and $y$ 
of $T$ a {\em cherry} of $T$, denoted by $x,y$, if the parent of $x$ is
simultaneously the parent of $y$. For any subset $Y\subseteq X$, we denote
by $T|Y$ the $Y$-tree obtained by restricting $T$ to $Y$ (suppressing resulting
degree two vertices).    

An example of a fully-resolved $X-$tree for $X=\{a,b,c,d,e,f,g\}$, 
and having two cherries, is shown in Fig. \ref{figure1}(i).

In case $|Y|=4$, say $Y=\{a,b,c,d\}$,
and the path from $a$ to $b$ does not share a vertex with the path from
$c$ to $d$ in $T|Y$, we refer to $T|Y$ as a {\em quartet tree} and
denote it by $ab||cd$. Note that by deleting any edge $e\in E$ from  $T$ 
the leaf sets $A_e$ and $B_e:=X-A_e$ of the resulting two trees induce 
a bipartition of $X$.
We refer to such a bipartition as {\em $X-$split} and denote it by $A|B$
where $A:=A_e$ and $B:=B_e$ and $A_e$ and $B_e$ are as above.
We say that two $X-$trees $T=(V,E)$ and $T'=(V',E')$ are {\em equivalent}
if there exists a bijection $\phi:V\to V'$ that is the identity on $X$
and extends to a graph isomorphism from $T$ to $T'$.

Suppose for the following that $T=(V,E)$ is an $X-$tree. Then we call
a map $w:E\to \mathbb R_{\geq 0}$ that assigns a {\em weight}, that
is, a non-negative real number, to
every edge of $T$ an {\em edge-weighting} for $T$. 
Note that this definition implies that some of the
edges of $T$ might have weight zero. We denote an $X-$tree $T$ together
with an edge-weighting $w$ by the pair $(T,w)$ and call an edge-weighting that
assign non-zero weight to every edge of $T$ that is not incident with
a leaf of $T$ {\em proper}. Note that for any edge-weighting $w$ of $T$,
taking the sum of the weights of the edges on the shortest path from some 
$x\in X$ to some $y\in X$ induces a distance between $x$ and $y$ and thus 
a distance $d=d_{(T,w)}$ on $X$.  

For example, in the tree in Fig. \ref{figure1}(i), 
if each edge has weight 1, then $d(a,b)=2, d(c,e)=4$, and $d(c,f)=5$.

 \begin{figure}[h] \begin{center}
\resizebox{12cm}{!}{
\includegraphics{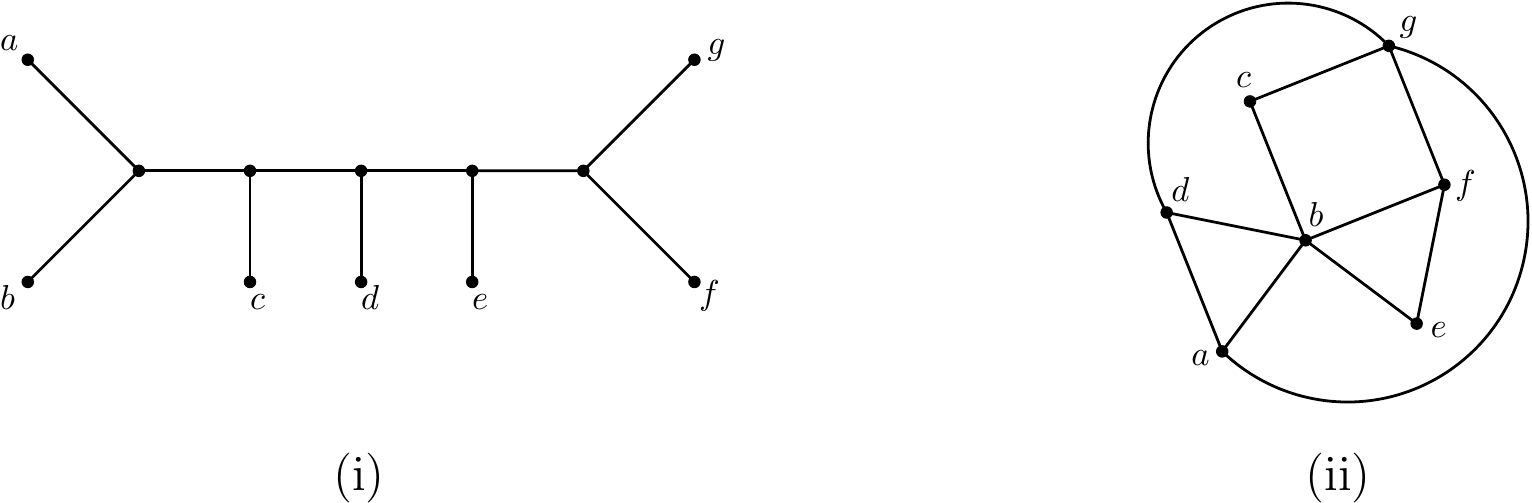}
}
\caption{\label{figure1}
(i) A fully-resolved tree $X-$tree $T$ for $X=\{a,b,c,d,e,f,g\}$; (ii) the graph $(X, \cl)$ corresponding to a strong lasso $\cl$ for $T$ (discussed further in Example 1).}
\end{center}

\end{figure}

\subsection{Lassos}

We call a subset
of $X$ of size two a {\em cord} of $X$ and, for $a,b\in X$ distinct
write $ab$ rather than $\{a,b\}$ for the cord containing $a$ and $b$.
Also, for any non-empty set $\cl\subseteq {X\choose 2}$ of cords of $X$,
we denote the edges of the graph $(X,\cl)$ whose vertex set
is $X$ and whose edge set is the set $\{\{a,b\}: ab\in\cl\}$ by
$ab$ rather than $\{a,b\}$, $ab\in \cl$. 

Suppose for the following that 
$\cl\subseteq {X\choose 2}$ is a non-empty set of cords of $X$.
If $T'=(V',E')$ is a further $X-$tree and $w$ and $w'$
are edge-weightings for $T$ and $T'$, respectively,
such that $d_{(T,w)}(x,y)=d_{(T',w')}(x,y)$ holds for all $xy\in \cl$
then we say that $(T,w)$ and $(T',w')$ 
are {\em $\cl$-isometric}. Moreover
we say that $\cl$ is
\begin{enumerate}
\item[(i)]
an {\em edge-weight lasso} for $T$ if for any two
proper edge-weightings $w$ and $w'$ for $T$ such that
$(T,w)$ and $(T,w')$ are $\cl$-isometric we have that $w=w'$.
\item[(ii)]
a {\em topological lasso} for $T$ if for any other
$X-$tree $T'$ and any two
proper edge-weightings $w$ and $w'$ for $T$ and $T'$, respectively, 
such that
$(T,w)$ and $(T',w')$ are $\cl$-isometric we have that $T$ and $T'$ are
equivalent.
\item[(iii)]
a {\em strong lasso} for $T$ if $\cl$ is simultaneously an
edge-weight and a topological lasso for $T$.
\end{enumerate}
If $\cl$ is a strong lasso for an $X-$tree then the graph $(X, \cl)$ must be connected, and each component of this graph must be
non-bipartite \cite{dre3}.  An example of a strong lasso $\cl$ of the tree in
 Fig. \ref{figure1}(i) is  the set of cords corresponding to the edges of  the graph in  Fig. \ref{figure1}(ii).

\subsection{Shellability}

Given a subset $\cl$ of $\ch$ with $X=\bigcup \cl$, and an $X-$tree 
$T$, we say that $\ch-\cl$ is {\em $T$--shellable} if
there exists an ordering of the cords in $\ch-\cl$ as, say, 
$a_1b_1, a_2b_2, \dots, a_mb_m$ such that, for every $\mu\in \{1,2,\dots,m\}$, 
there exists a pair $x_\mu,y_\mu$ of
 `pivots' for $a_\mu b_\mu$, i.e.,  two distinct elements 
$x_\mu,y_\mu\in X-\{a_\mu,b_\mu\}$,
 for which the tree $T|{Y_\mu}$ obtained from $T$ by restriction
to $Y_\mu:=\{a_\mu,b_\mu,x_\mu,y_\mu\}$, is the quartet tree 
$a_\mu x_\mu||y_\mu b_\mu$, and
all cords in $\binom{Y_\mu}{2}$ except $a_\mu b_\mu$ are contained
in $\cl_\mu:=\cl \cup \big\{a_{\mu'}b_{\mu'}: \mu'\in
\{1,2,\dots,\mu-1\}\big\}$.
Any such ordering of $\ch-\cl$ will also be called a 
{\em shellable ordering} 
 of  $\ch-\cl$, and any subset $\cl$ of $\ch$ for which a 
 shellable ordering of  $\ch-\cl$ exists will  also be called an 
{\em shellable lasso for  $T$}. In \cite[Theorem 6]{dre3}, 
it was established that every shellable lasso for an $X-$tree is
in particular a strong lasso for that tree.

\subsection{Example 1}
Consider the seven-taxon tree, shown in Fig. \ref{figure1}(i), and the lasso $\cl = \{ab, bd, ad, bc, bf, ag, dg, eb, ef, fg, gc\}$
(the edges of the graph in  Fig. \ref{figure1}(ii)).
The  remaining ten chords in $\binom{X}{2}-\cl$ have a shellable ordering, described as follows:
$$bg, cd, ac, cf, ce, af, df, ae, eg, ed,$$
where the corresponding cord pivots are:
$$(a,d), (b,g), (b,d), (b,g), (b,f), (b,g), (b,g), (b,f), (a,f), (b,f),$$
and so $\cl$ is a shellable (and hence strong) lasso for $T$.
\hfill$\Box$

\subsection{Covers, triplet covers}
A necessary condition for $\cl \subseteq \binom{X}{2}$ to be a edge-weight lasso or a topological lasso 
for a fully-resolved
$X-$tree is that $\cl$ forms a  {\em cover} for $T$ -- that is, for each
interior vertex $v$ of $T$, $\cl$ contains a pair of leaves from each pair of the three components of  $T-v$. 
However this condition is not sufficient for $\cl$ to be either an edge-weight lasso or a topological lasso (examples are given
in \cite{dre3}).

A particular type of cover for a fully-resolved $X-$tree 
is a {\em triplet cover} which is defined as any subset 
$\cl$ of $\binom{X}{2}$ with the 
property that for each interior vertex $v$ of $T$ we can select leaves $a,b,c$ from each of the three components of $T-v$ so that
$ab, ac, bc \in \cl$.  It can be shown that if $\cl$ is a triplet cover for a fully-resolved $X-$tree $T$ then $\cl$ is an edge-weight lasso.
However it is not known whether or not every triplet cover of every such $T$ is also a topological (and thereby a strong) lasso for $T$.

\section{A  special class of triplet covers}
  
Suppose that $T=(V,E)$ is a fully-resolved $X-$tree, and let 
$$
{\rm clus}(T) := \bigcup_{e \in E} \{A_e, X-A_e\},
$$
where $A_e|(X-A_e)$ denotes the $X-$split associated with edge $e\in E$.  
We call the elements in ${\rm clus}(T)$ `clusters'  (in biology, they are 
also sometimes referred to as `clans' \cite{wil}).  Thus a cluster is a 
subset of $X$ that corresponds to the leaf labels on one side of some 
edge of $T$.

Given a collection $\cC$ of non-empty subsets of $X$ we say that any function $f: \cC \rightarrow X$ is a {\em stable transversal} for $\cC$ if it satisfies the two properties:

\begin{itemize}
\item (transversality)  $f(A) \in A$, for all $A \in  \cC$;
\item (stability)    
$f(A) \in B \subseteq A \Longrightarrow f(A) = f(B)$ for all $A, B \in  \cC$.
\end{itemize}

Mostly we will be concerned with stable transversals for ${\rm clus}(T)$, which were introduced in \cite{boc}, though for a different purpose. 
 
 \bigskip
 
\subsection{Example 2} An example of a stable transversals for ${\rm clus}(T)$ is as follows: Consider {\em any} stable transversal $g$ for $2^X$ (equivalently, the function $g(A) = \min A$ under some total ordering of $X$), and consider {\em any} proper edge weighting $w$ of $T$.   For a cluster $A \in {\rm clus}(T)$, consider the subset $A_w$ of leaves of $T$ in 
$A$ that are a closest to the edge $e$ whose deletion induces the split $A|X-A$.  Here `closest' refers to the path distance in $T$ from each leaf in $A$ to $e$ under the edge weighting $w$.  If we let  $f(A) = g(A_w)$,  for each $A \in {\rm clus}(T)$ then $f$ is a stable transversal for ${\rm clus}(T)$. Notice that this holds also for the corresponding function in which `closest' is replaced by `furthest' throughout. 
\hfill$\Box$

\subsection{Example 3} 
Consider the fully-resolved $X-$tree shown in Fig. \ref{figure2}(i), 
and the function $f$ defined as follows:
$f(\{x\}) = x$ for all $x$ in $X$, and
$$f(\{a,a'\})=a, f(\{b,b'\})=b, f(\{c,c'\})=c$$ and
$$f(X-\{a,a'\})=b, f(X-\{b,b'\})=c, f(X-\{c,c'\})=a.$$ 
Then $f$ is a stable transversal for $T$.   Note that the choices of $b,c,a$ in the last line could be replaced by, for example, $c,a,b$ or $c,c,a$ and we would still have a stable transveral.  
\hfill$\Box$

 \begin{figure}[h]
  \begin{center}
\resizebox{12cm}{!}{
\includegraphics{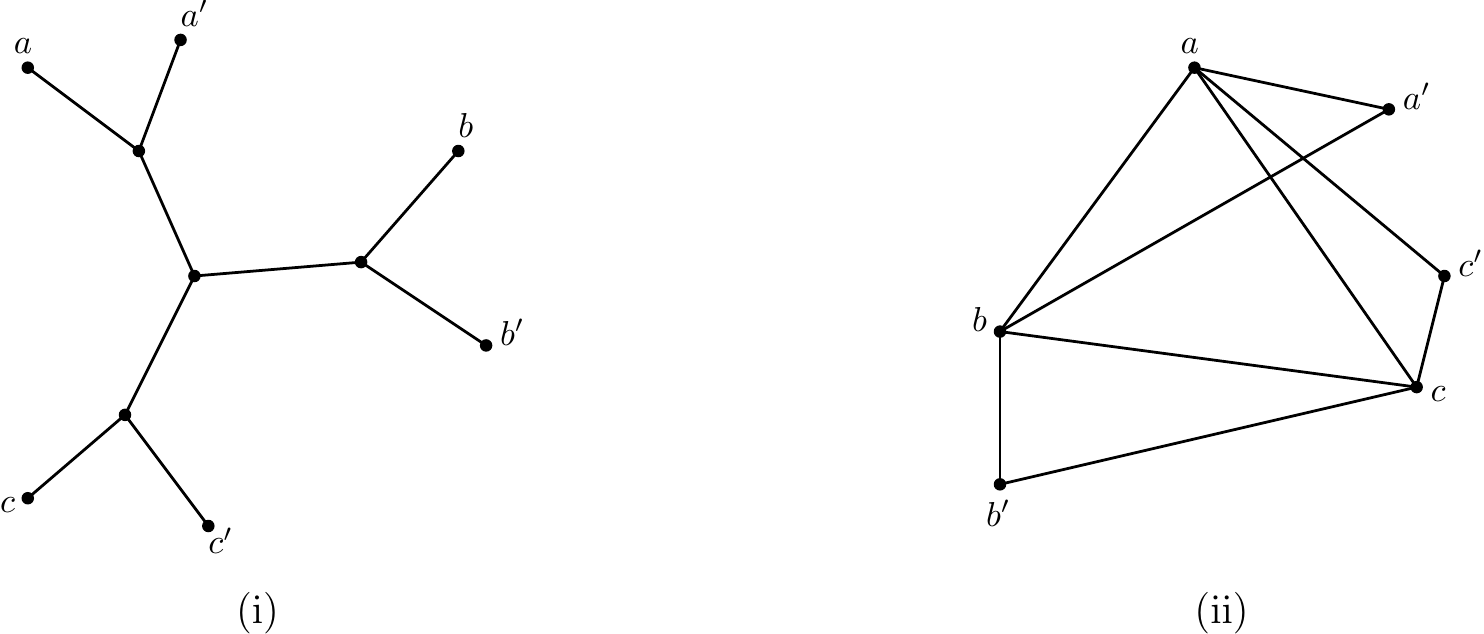}}
\caption{\label{figure2}
(i) A fully-resolved $X-$tree for the set $X=~\{a,a', b,b', c,c'\}$;  the graph $(X,  \cl)$ where $\cl = \cl_{(T, f)}$ forms a stable triplet cover for $T$, and where $f$ is as defined in Example 3.}
\end{center}
\end{figure}

 \section{Stable triplet covers are minimal strong lassos for $T$}
Given a fully-resolved $X-$tree $T$, a stable transversal $f$ of ${\rm clus}(T)$  defines a triplet cover for $T$ as follows:  For each interior vertex $v$ of $T$,  consider the three components of the graph $T-v$, and let
 $A^1_v, A^2_v, A^3_v$ denote their leaf sets.  Then let 
 $$
\cl_{(T, f)} := \bigcup_{v \in V_{\rm int}} \{  f(A^1_v)f(A^2_v), f(A^2_v)f(A^3_v), f(A^3_v)f(A^1_v)\}
$$
where $V_{\rm int}$ denotes the set of interior vertices of $T$.
We say that $\cl$ is a  {\em stable triplet cover} (generated by $f$)  if $\cl = \cl_{(T,f)}$ for some stable transversal $f$ of ${\rm clus}(T)$. 
For example, for the pair $(T, f)$ described in Example 3, we have:
$$\cl_{(T, f)} = \{ab, ac, bc, aa', a'b, bb' ,b'c, cc', c'a\},$$
and the graph $(X, \cl)$ for $\cl=\cl_{(T, f)}$ is 
shown in Fig. \ref{figure2}(ii). 
Notice that not all triplet covers are stable; indeed the set of  triplet covers of a fully-resolved $X-$tree $T$ is precisely the set of subsets of $\binom{X}{2}$ of the form
$\cl_{(T, f)}$ where $f$ is required to  satisfy only the transversality property above for some $f: {\rm clus}(T) \rightarrow X$.

\subsection{2d-trees}
Interestingly, Fig. \ref{figure2}(ii) shows 
that for the set $\cl=\cl_{(T, f)}$ with $T$ and $f$ from Example 3,
the graph  $(X,\cl)$ is a  $2d$-tree, 
where a graph $G=(V,E)$
is called a {\em $2d$-tree} if  there exists an
ordering $x_1,x_2,\ldots, x_n$ of $V$ such that $\{x_1,x_2\}\in E$
and, for $i=3,\ldots, n$ the vertex $x_i$ has degree 2 in the subgraph
 of $G$ induced by $\{x_1,x_2,\ldots, x_i\}$. $2d$-trees are examples
of $kd$-trees which were characterized in \cite{todd} and also 
studied in e.\,g.\,\cite{gue}.   In Fig. \ref{figure2}(ii) an acceptable vertex ordering 
is $a,b,c, a',b',c'$. The graph in Fig. \ref{figure1}(ii) is also a 2d-tree, as can be seen
by considering the vertex ordering $a,b,d, g,c, f,e$. 

\subsection{Main result}

We can now state our first main result which relates stable triplet covers with 2d-trees and shellable lassos.

 \begin{theorem}
 \label{maintheorem}
If $\cl$ is a stable triplet cover of a fully-resolved $X-$tree $T$
with $n:=|X|\geq 3$, then 
\begin{itemize}
\item[(i)]  $(X,\cl)$ is a 2d-tree.
\item[(ii)]  $\cl$ is an shellable lasso for $T$, and so $\cl$ is a strong lasso for $T$. 
\item[(iii)]  $|\cl|= 2n-3$, and so $\cl$ is also a minimal strong lasso for $T$.
\end{itemize}
\end{theorem}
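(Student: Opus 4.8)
The plan is to prove all three parts simultaneously by induction on $n=|X|$, deleting a single cherry leaf at each step. The base case $n=3$ is immediate: $T$ is a star with one interior vertex, $\cl$ is the triangle on $X$ (a $2d$-tree with $3=2\cdot 3-3$ edges), and $\ch-\cl=\0$ is vacuously shellable. For the inductive step I would fix a cherry $\{x,y\}$ of $T$ (which exists since $T$ is fully-resolved with $n\geq 3$), with parent $p$ and third neighbour $q$, and set $z:=f(X-\{x,y\})$. Since the three components of $T-p$ have leaf sets $\{x\},\{y\},X-\{x,y\}$ with $f$-values $x,y,z$, the cords contributed by $p$ are exactly the triangle $xy,xz,yz$. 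Everything then hinges on one structural fact about which cherry leaf can be safely removed.

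The key fact is: letting $w$ be whichever of $x,y$ is \emph{not} equal to $f(\{x,y\})$ and $w'$ the other (so $f(\{x,y\})=w'$), the leaf $w$ has degree exactly $2$ in $(X,\cl)$, with neighbours $w'$ and $z$. To prove this I would note that as $v$ ranges over the interior vertices, the component of $T-v$ containing $w$ runs through a nested chain $\{w\}=K_0\subsetneq K_1\subsetneq\cdots$ with $K_1=\{x,y\}$. A cord incident with $w$ can only arise at an interior vertex one of whose components $A$ satisfies $f(A)=w$, and such an $A$ must be one of the $K_i$. Stability forces $\{i:f(K_i)=w\}$ to be downward closed, and since $f(K_1)=f(\{x,y\})=w'\neq w$ the only such index is $i=0$, the vertex $p$. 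Hence the only cords incident with $w$ are $ww'$ and $wz$, which are distinct because $w'\in\{x,y\}$ while $z\in X-\{x,y\}$.

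The crux is the inductive passage to $T':=T|(X-\{w\})$. I would define a stable transversal $f'$ of ${\rm clus}(T')$ by pulling back $f$: set $f'(A')=f(A'\cup\{w\})$ when $A'\cup\{w\}\in{\rm clus}(T)$ and $f(A'\cup\{w\})\neq w$, and $f'(A')=f(A')$ otherwise. Deleting $w$ suppresses $p$ and merges the clusters $\{w'\}$ with $\{x,y\}$, and $X-\{x,y\}$ with $X-\{w\}$; one checks this makes $f'$ well defined (using that the only clusters with $f$-value $w$ are $\{w\}$ and the co-singleton $X-\{w'\}$, the latter never occurring as a component at an interior vertex) and, via the stability of $f$, that $f'$ is again a stable transversal. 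This verification is routine but is the delicate part of the argument, and is where I expect the main obstacle. The key consequence is $\cl_{(T',f')}=\cl-\{ww',wz\}$: the interior vertices of $T'$ are exactly those of $T$ other than $p$, and deleting $w$ changes no $f$-value used at any of them, so their cords persist. The only subtlety is the surviving triangle cord $w'z$, which is generated at $p$ but \emph{also} at $q$: writing $B_1,B_2$ for the two components of $T-q$ other than $\{x,y\}$, stability applied to $z=f(X-\{x,y\})\in B_1\subseteq X-\{x,y\}$ gives $z=f(B_1)$, so $q$ already contributes $w'z$. This is precisely the single cord-coincidence carried by the interior edge $pq$, and it is what keeps $w'z$ from being lost; meanwhile $ww'$ and $wz$ occur only at $p$ by the structural fact, so exactly these two cords disappear.

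With the inductive step in place the three conclusions assemble quickly. For (i), a $2d$-tree ordering of $(X-\{w\},\cl_{(T',f')})$ extends to one for $(X,\cl)$ by appending $w$, which has degree $2$ with neighbours $w',z\in X-\{w\}$. For (iii), $|\cl|=|\cl_{(T',f')}|+2=(2(n-1)-3)+2=2n-3$; minimality follows because $T$ has $2n-3$ edges and, the cord-distances being linear in the edge weights, any set of fewer than $2n-3$ cords leaves a nontrivial kernel and so cannot be an edge-weight lasso. For (ii), I extend a shellable ordering of $\binom{X-\{w\}}{2}-\cl_{(T',f')}$ by appending, in any order, the $n-3$ new non-cords $wt$ with $t\in X-\{w,w',z\}$, each given pivots $(w',z)$: since $w,w'$ form a cherry, $T|\{w,w',z,t\}$ is the quartet $ww'\,||\,zt$, and the remaining pairs of $\binom{\{w,w',z,t\}}{2}$ — namely the cords $ww',wz,w'z$ and the pairs $w't,zt$ inside $X-\{w\}$, which precede $wt$ and hence are available — all lie in the relevant $\cl_\mu$. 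Thus $\cl$ is a shellable lasso, and by \cite[Theorem 6]{dre3} a strong (hence minimal strong) lasso, completing the induction.
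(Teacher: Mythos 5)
Your proposal follows essentially the same route as the paper's proof: induction on $n$, deleting the cherry leaf $w$ \emph{not} selected by $f(\{x,y\})$, pulling $f$ back to a stable transversal $f'$ of ${\rm clus}(T')$ (your $f'$ coincides with the paper's, after one application of stability), identifying $\cl_{(T',f')}=\cl-\{ww',wz\}$, and then extending the $2d$-tree ordering, the cord count, and the shelling. There is, however, one false intermediate claim in your ``key fact''. The components of $T-v$ containing $w$, as $v$ ranges over the interior vertices, do \emph{not} in general form a nested chain: in the paper's own Example 3 (the six-leaf tree of Fig.~\ref{figure2}(i)), with cherry $\{a,a'\}$ and $w=a'$, the components containing $a'$ at the interior vertex adjacent to $b,b'$ and at the interior vertex adjacent to $c,c'$ are $\{a,a',c,c'\}$ and $\{a,a',b,b'\}$, which are incomparable. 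Fortunately your argument never needs nestedness of the whole family: all it needs is that every component $K(v)$ containing $w$ at an interior vertex $v\neq p$ contains $K_1=\{x,y\}$ (true, since deleting $v\neq p$ leaves $w$, $p$ and $w'$ in one component), after which stability applied to the pair $\{x,y\}\subseteq K(v)$, together with $f(\{x,y\})=w'\neq w$, already forces $f(K(v))\neq w$. With that one-line repair the degree-two claim, and hence the identification $\cl'=\cl-\{ww',wz\}$ and the fact that $\cl'\subseteq\binom{X'}{2}$, is correct. (The paper simply asserts the corresponding fact, that ``$y$ is not an element of any cord of $\cl'$'', without proof.)

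Two further points of comparison. Your shelling step is genuinely simpler than the paper's: you give every new cord $wt$ the same pivot pair $(w',z)$, which works because $w,w'$ form a cherry (so $T|\{w,w',z,t\}=ww'||zt$ for \emph{every} $t\in X-\{w,w',z\}$) and because $ww',wz,w'z$ all lie in $\cl$ while $w't,zt$ lie in $\binom{X'}{2}$ and hence either lie in $\cl'$ or occur earlier in the ordering; the paper instead splits the new cords into two blocks according to the components of $T-q$ and uses different pivots for each block, a case distinction your argument shows to be unnecessary. Your explicit observation that the third triangle cord $w'z$ survives because it is regenerated at $q$ (via stability, $f(B_1)=z$) is also a point the paper leaves implicit. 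On the other hand, like the paper, you defer the verification that $f'$ is a stable transversal of ${\rm clus}(T')$ as ``routine''; that matches the level of detail of the published proof, but it is where the stability hypothesis is actually consumed, so a fully self-contained write-up should include it.
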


\begin{proof}
We prove parts (i)--(iii) simultaneously by induction on $n=|X|$.  
Shellability holds trivially for $n=3$  (since then $\binom{X}{2}-\cl = \emptyset$), so suppose that it holds when $n=k \geq 3$, and that $T$ is a fully-resolved tree with $k+1$ leaves, and that $\cl$ is a triplet cover for $T$ generated
by a stable transversal $f$ of ${\rm clus}(T)$.  Select
any  cherry $x,y$ of $T$. Without loss of generality, we may suppose that $f(\{x,y\})=x$. 
 Let $$z := f(X-\{x,y\}),   X': = X-\{y\}, T':=T|X',  \cl':= \cl-\{xy, yz\},$$ and define $f': {\rm clus}(T) \rightarrow X$ by setting 
 $$f'(A) =
 \begin{cases}
 & f(A),  \mbox{ if } x \not\in A; \\
 & f(A\cup \{y\}),   \mbox{ if } x \in A.
\end{cases}
$$
Note that, since $f$ is a stable transversal for ${\rm clus}(T)$, it follows that $y$ is not an element of any cord of $\cl'$, and so $\cl' \subseteq \binom{X'}{2}$. Moreover, $y\not=f'(A)$ for
any $A \in {\rm clus}(T')$, and so $f': {\rm clus}(T') \rightarrow X'$. It can now be checked that  $f'$ is a stable transversal for ${\rm clus}(T')$ and so  $\cl'$ is a stable triplet cover of $T'$, generated by $f'$. By the inductive hypothesis (applied to $T'$ and $ \cl'$) it follows 
with regards to (i) that $(X',\cl')$ is a 2d-tree. Clearly
adding $y$ to the vertex set of that graph and $xy$ and $zy$
to its edge set preserves the 2d-tree property. By the definition of
$\cl'$ it is easy to see that the resulting graph is $(X,\cl)$.

Note that regarding (ii) and (iii)
the induction hypothesis implies 
that $|\cl'| = 2k-3$,  and so $|\cl| = 2(k+1)-3$ and that  $\binom{X'}{2} - \cl'$ is shellable. So let us fix an ordering of $\binom{X'}{2} - \cl'$
that provides such a shelling.  This will form the initial segment of a
shellable ordering  of $\binom{X}{2}-\cl$.

To describe this extended ordering, let $v$ be the interior vertex of $T$ adjacent to leaves $x$ and $y$, and let $u$ be the interior vertex of $T$ adjacent to $v$.  Consider the three components of the graph  $T-u$. One component contains $x,y$, and we will denote the leaf sets of the other two components by $X_2$ and $X_3$, where, without loss of generality, $z \in X_3$.  Notice that $\binom{X}{2}-\cl$ is the disjoint union of the three sets: 
 $$\binom{X'}{2}-\cl', \{ty: t \in X_2\} \mbox{ and } \{ty: t \in X_3-\{z\}\}.$$   We order $\binom{X}{2}-\cl$ as follows: the elements of $\binom{X'}{2}-\cl'$ come first, ordered by their shellable ordering, followed by the elements 
$ty$ with  $t \in X_2$ (in any order), followed by the
elements $ty$ with  $t \in X_3-\{z\}$ (in any order).

We claim that any such ordering provides a shellable ordering of $\binom{X}{2}-\cl$. To see this, observe first that, for any leaf $t \in X_2$, the elements $x,z$ provide `pivots' for the pair $t,y$, since
$T|\{x,y,z,t\} = xy||zt$ and all cords in $\binom{\{x,y,z,t\}}{2}$ except $ty$ are contained in $\cl \cup (\binom{X'}{2}- \cl')$.  
Also,  for any  leaf $t \in X_3$,  if we select any leaf $z' \in X_2$ then the pair $x,z'$ provides a `pivot' for $t,y$, since $T|\{x,y,z',t\} = xy||z't$, and all cords in $\binom{\{x,y,z',t\}}{2}$ except $ty$ are contained in $\cl \cup (\binom{X'}{2}- \cl') \cup \{t'y: t' \in X_2\}$.  
In all cases, the cords required for pivoting come earlier in the ordering. 

Thus, we have established that $\cl'$ is an shellable lasso for $T$, and so, by Theorem 6 of \cite{dre3},  $\cl$ is also a strong lasso for $T$.  Moreover, we showed
that $|\cl| = 2|X|-3$, and since this equals the number of edges in any fully-resolved $X$--tree, linear algebra ensures that no strict subset of $\cl'$ could be an edge weight-lasso for $T$. Hence,  $\cl$ is a minimal strong lasso for $T$, which completes the proof of the induction step, and thereby of the theorem.

\end{proof}

\subsection{Remarks}
\begin{itemize}
\item[(1)] Just because a graph $(X, \cl)$ is a 2d-tree, it does not follow that $\cl$ forms a strong (let alone a shellable) lasso for every given fully-resolved $X-$tree $T$.   A simple example is furnished by $X=\{a,b,c,d\}$ and $\cl = \{ab, ac, bc, ad, bd\}$, for which $(X, \cl)$ is a 2d-tree, and yet $\cl$ fails to be a strong lasso for $T= ab||cd$.   

However, if $(X, \cl)$ forms a 2d-tree, or more generally 
if $\cl$ contains a subset $\cl'$ such that $(X, \cl')$ is a 
2d-tree, then $\cl$ is a strong lasso for at least one 
fully-resolved $X-$tree.  The proof is constructive based 
on the ordering $x_1, x_2, \ldots, x_n$ in the definition of 
a 2d-tree:  Start with the tree consisting of leaves $x_1$ 
and $x_2$, and construct a fully-resolved tree as follows: 
for each $i>2$,  if $x_i$ is adjacent to $x_j$ and  $x_k$ in
$(X,\cL')$ (where $j,k<i$) then let $x_i$ be the leaf that is 
attached by a new edge to a {\em new}  subdivision vertex 
on the path connecting $x_j$ and $x_k$ in the tree so-far constructed. 

It may be of interest to explore further the connection between shellability and 2d-trees, and in particular, the question of when the former property for some set $\cl \subseteq \binom{X}{2}$ entails the latter property for $\cl$, or for some subset of $\cl$. 

\item[(2)]    Suppose that $T$ is a fully-resolved $X-$tree, and $\cl \subseteq \binom{X}{2}$ contains a stable triplet cover.  A natural setting in which this situation arises is the following.  Suppose $(T,w)$ is a properly edge-weighted fully-resolved $X-$tree, and $\cl \subseteq \binom{X}{2}$ has the property that, for any interior vertex, $v$, $\cl$ contains every chord $xy$ for which $x$ is a closest leaf to $v$ in one subtree of $T-v$ and $y$ is a closest leaf to $v$ in another subtree of $T-v$.  Then, as noted in Example 2 above, $\cl$ contains a stable triplet cover.   

Now, when $\cl$ contains a stable triplet cover for $T$, it 
follows by  Theorem~\ref{maintheorem} that $\cl$ is a shellable, 
and thereby also a strong lasso for $T$ (since any superset of a
strong lasso for a tree is also a strong lasso for that tree).   
However, it is perhaps not clear how one might efficiently 
construct $(T,w)$ from the distances induced by $\cl$, particularly 
when the subset of $\cl$ corresponding to the stable triplet cover 
is not also given explicitly. Thus, in the next section we describe 
a polynomial-time algorithm for reconstructing $(T, w)$ whenever 
$\cl$ contains some (unknown) shellable lasso for $T$.

\end{itemize}

\section{An algorithm for reconstructing $(T, w)$ from $d_{(T,w)}|\cl$ when $\cl$ contains an shellable lasso for $T$.}

Suppose that $\cl\subseteq {X\choose 2}$ and that $T$ is a fully-resolved $X-$tree, $w$ is a proper 
edge-weighting of $T$ and $d=d_{(T,w)}$.  Starting with $\cl^*= \cl$
add cords to $\cl^*$ and extend the domain of $d$ to those cords, by repeated application of the following extension rule ($\cR$), described in \cite{gue} (Section 6.2, page 246):

\begin{itemize}
\item[($\cR$)]
Whenever $x,y,z,u \in X$ and 
$$
\binom{\{x,y,u,z\}}{2} - \{xz\} \subseteq \cl^*, xz \not\in \cl^*,  
\mbox{ and }
$$ 
$$ 
d(x,y)+d(u,z) < d(x,u)+d(y,z)
$$
add $xz$ to $\cl^*$, and let $d(x,z) := d(x,u)+d(y,z)-d(y,u).$
\end{itemize}
Let ${\rm cl}_\cR(\cl)$ be the set of resulting set of cords obtained from the initial set $\cl$ when this extension rule no longer yields any new cords.  

Note that ${\rm cl}_\cR(\cl)$ can be computed in polynomial time, and that $d-$values are assigned for all cords in ${\rm cl}_\cR(\cl)$. Moreover, if ${\rm cl}_\cR(\cl) = \binom{X}{2}$, then ${\rm cl}_\cR(\cl)$ is a strong lasso for $T$, however the converse does not hold (Example 6.2 of \cite{dre3} provides a counterexample).

\begin{theorem}
If $\cl\subseteq {X\choose 2}$ contains an shellable lasso for a 
fully-resolved $X-$tree  $T$, and $d= d_{(T,w)}$, for some proper 
edge weighting $w$, then ${\rm cl}_\cR(\cl) = \binom{X}{2}$. Consequently,  
$T$  and $w$ can be reconstructed in polynomial time from the 
restriction of $d$ to $\cl$. 
\end{theorem}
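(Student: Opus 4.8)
The goal is to show that if $\cl$ contains a shellable lasso $\cl_0$ for $T$, then the extension rule $(\cR)$, starting from $\cl$, terminates with ${\rm cl}_\cR(\cl) = \binom{X}{2}$. The natural strategy is to \emph{follow the shellable ordering} of $\binom{X}{2} - \cl_0$ and argue, cord by cord, that each cord $a_\mu b_\mu$ in that ordering gets added to $\cl^*$ by a single application of $(\cR)$, with the relevant pivots supplying the four-element configuration the rule requires. Since $\cl_0 \subseteq \cl \subseteq \cl^*$ throughout, and since the extension rule only ever adds cords (never removes them), it suffices to prove that the cords of $\binom{X}{2} - \cl_0$ can all be recovered; that will force ${\rm cl}_\cR(\cl) = \binom{X}{2}$.

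\textbf{Main steps.} First I would set up an induction along the shellable ordering $a_1b_1, \dots, a_mb_m$ of $\binom{X}{2} - \cl_0$, with the inductive claim that after processing the first $\mu$ cords, all of $\cl_0 \cup \{a_1b_1, \dots, a_\mu b_\mu\}$ lie in $\cl^*$ \emph{with the correct $d$-values}. For the inductive step, recall that the shellability of $\binom{X}{2} - \cl_0$ furnishes pivots $x_\mu, y_\mu$ with $T|\{a_\mu, b_\mu, x_\mu, y_\mu\} = a_\mu x_\mu || y_\mu b_\mu$, and with all five cords of $\binom{\{a_\mu, b_\mu, x_\mu, y_\mu\}}{2}$ other than $a_\mu b_\mu$ already in $\cl_\mu \subseteq \cl^*$. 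I would then verify that this quartet topology produces a strict four-point inequality of the form demanded by $(\cR)$: writing the quartet as $a_\mu x_\mu || y_\mu b_\mu$ and using that $w$ is proper (so the interior edge of the quartet has positive weight), the sum of the two ``crossing'' distances strictly exceeds the sum of the ``matching'' ones, which is exactly the strict inequality $d(x,y)+d(u,z) < d(x,u)+d(y,z)$ after matching up the roles $x \mapsto a_\mu$, $z \mapsto b_\mu$, and $\{y,u\} \mapsto \{x_\mu, y_\mu\}$. Finally I would check that the value $(\cR)$ assigns, $d(a_\mu, b_\mu) := d(a_\mu, x_\mu) + d(y_\mu, b_\mu) - d(x_\mu, y_\mu)$, is indeed the true path distance $d_{(T,w)}(a_\mu, b_\mu)$: this is the standard four-point identity for the quartet $a_\mu x_\mu || y_\mu b_\mu$, so the recovered $d$-value is correct and the induction can continue. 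Once all of $\binom{X}{2} - \cl_0$ has been recovered, ${\rm cl}_\cR(\cl) = \binom{X}{2}$, and the closing remark (that ${\rm cl}_\cR(\cl) = \binom{X}{2}$ makes it a strong lasso, computable in polynomial time) yields the reconstruction of $(T,w)$.

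\textbf{Expected obstacle.} The bookkeeping step is matching the abstract quartet $a_\mu x_\mu || y_\mu b_\mu$ against the precise symbolic pattern of rule $(\cR)$, namely ensuring the four-element subset $\{x,y,u,z\}$ in the rule is assigned so that the \emph{strict} inequality points the right way and the added cord is $a_\mu b_\mu = xz$ rather than one of the other pairs. The subtlety is that $(\cR)$ only fires when exactly one cord of the quartet is missing and the other five are present with known $d$-values; I must confirm that the shellability condition guarantees \emph{precisely} this configuration (five known cords, the missing one being $a_\mu b_\mu$) at the moment cord $\mu$ is processed. This is where the definition of $\cl_\mu$ and the inductive hypothesis do the real work, and care is needed to see that no earlier-added cord has accidentally become ``wrong'' — but since every assigned value is the genuine tree distance, consistency is automatic, and the only genuine content is the correctness of the four-point formula together with the strictness coming from properness of $w$.
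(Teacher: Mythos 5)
Your proposal is correct and follows essentially the same route as the paper: the paper packages the identical argument as a minimal-counterexample (take the smallest index $j$ in the shelling with $a_jb_j \notin {\rm cl}_\cR(\cl')$, observe that the pivots and minimality force rule $(\cR)$ to fire, contradiction), which is just the contrapositive form of your direct induction along the shellable ordering; your explicit checks of the strict inequality (via properness of $w$) and of the correctness of the assigned value are details the paper leaves implicit. One slip in your bookkeeping, at exactly the spot you flagged as delicate: with the quartet $a_\mu x_\mu||y_\mu b_\mu$ and the roles $x = a_\mu$, $y = x_\mu$, $u = y_\mu$, $z = b_\mu$, rule $(\cR)$ assigns $d(a_\mu,b_\mu) := d(a_\mu,y_\mu)+d(x_\mu,b_\mu)-d(x_\mu,y_\mu)$, not $d(a_\mu,x_\mu)+d(y_\mu,b_\mu)-d(x_\mu,y_\mu)$ as you wrote; your expression is the \emph{small} four-point sum minus $d(x_\mu,y_\mu)$, which underestimates the true distance by twice the length of the quartet's interior path. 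The four-point identity $d(a_\mu,b_\mu)+d(x_\mu,y_\mu)=d(a_\mu,y_\mu)+d(x_\mu,b_\mu)$ (the two larger sums coincide) shows the rule's actual assignment is the true tree distance, so the fix is immediate and the rest of your argument stands.
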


\begin{proof}
Suppose that $\cl' \subseteq \cl$ is a  shellable lasso for $T$; we will show that  ${\rm cl}_\cR(\cl') = \binom{X}{2}$ and so
 ${\rm cl}_\cR(\cl) = \binom{X}{2}$.  Suppose to the contrary that  ${\rm cl}_\cR(\cl')$ is a strict subset of $\binom{X}{2}$, and
consider any shelling $a_1b_1, \ldots, a_mb_m$ of the cords in $\binom{X}{2}-\cl'$ (such a shelling exists  by the assumption that
$\cl'$ is an shellable lasso for $T$). Let $j \in \{1, \ldots, m\}$ be the smallest index for which $a_jb_j \not\in {\rm cl}_\cR(\cl')$.  Then the condition on the shelling
ensures that there exists pivots $x_j, y_j \in X-\{a_j,b_j\}$ so that for $Y=\{a_j,b_j,x_j,y_j\} $ we have  $T|Y$ is the quartet tree $a_jx_j||b_jy_j$ and that each cord in $\binom{Y}{2} -\{a_jb_j\}$
either is an element of $\cl'$ or it occurs earlier in the ordering for the shelling than $a_jb_j$, and so, by the minimality assumption concerning $j$,  all these cords lie in ${\rm cl}_\cR(\cl')$.  Consequently,
$a_jb_j \in {\rm cl}_\cR({\rm cl}_\cR(\cl')) = {\rm cl}_\cR(\cl')$, a contradiction.  Thus, our assumption that ${\rm cl}_\cR(\cl')$ is a strict subset of $\binom{X}{2}$ is 
not possible, as required.   

Finally, to efficiently recover $(T, w)$, once $d$ has been defined on all of $\binom{X}{2}$, one can apply standard distance-based reconstruction methods
for fully-resolved trees, such as the Neighbor-Joining method \cite{fel}. 
\end{proof}

\section*{{\sc Acknowledgments}}
{\em
M.S. thanks the Royal Society of NZ under its James Cook Fellowship scheme.  
Both authors thank the organizers of the ``Structure Discovery in Biology: 
Motifs, Networks $\&$ Phylogenies'', Schloss Dagstuhl, Germany, meeting where
this paper was conceived.
 }

\end{document}